\g@addto@macro{\endabstract}{\@setabstract}
\newcommand{\authorfootnotes}{\renewcommand\thefootnote{\@fnsymbol\c@footnote}}%
\newtheorem{prop}{Proposition}[section]
\newtheorem{lemma}[prop]{Lemma}
\newtheorem{theorem}[prop]{Theorem}
\newtheorem{definition}[prop]{Definition}
\newtheorem{corollary}[prop]{Corollary}
\newtheorem{proposition}[prop]{Proposition}
\newtheorem{example}[prop]{Example}
\begin{document}

\title[Stationarity Tests for Time Series]{Stationarity as a Path Property with Applications in Time Series Analysis}
\thanks{Email: yi.shen@uwaterloo.ca}
\thanks{This work is partially supported by NSERC grant.}
\renewcommand\Authands{ and }

\thanks{}

\subjclass[2010]{Primary 60G10,  62M10, 60G17}
\keywords{stationarity test, time series, path property}
\vspace{.5ex}

\maketitle
\begin{center}

  \normalsize
  \authorfootnotes
  Yi Shen\textsuperscript{1} and Tony S. Wirjanto\textsuperscript{1,2}\par \bigskip
  \textsuperscript{1}Department of Statistics and Actuarial Science, University of Waterloo. Waterloo, ON N2L 3G1, Canada. \par
  \textsuperscript{2}School of Accounting and Finance, University of Waterloo. Waterloo, ON N2L 3G1, Canada.\par \bigskip
\thanks{Email: yi.shen@uwaterloo.ca}

\end{center}

\begin{abstract}
Traditionally stationarity refers to shift invariance of the distribution of a stochastic process. In this paper,
we rediscover stationarity as a path property instead of a distributional property. More
precisely, we characterize a set of paths denoted as $A$, which corresponds to the notion of stationarity. On one hand, the set $A$ is shown to be large enough, so that for any stationary process, almost all of its paths are in $A$. On the other hand, we prove that any path in $A$ will behave in the optimal way under any stationarity test satisfying some mild conditions. The results provide a unified framework to understand and assess the existing time series tests for stationarity, and can potentially lead to new families of stationarity tests.
\end{abstract}

\section{Motivation}
Stationarity plays an important role in time series analysis. Many statistical properties of a time series rely on the assumption that the time series is above all stationary. The tests for
stationarity, therefore, become crucial and should be applied as a preliminary step in many analysis. In the time series literature, various tests have been proposed. Many existing tests to
discriminate between stationarity and nonstationarity rely on the concept of a unit root, such as the Dickey-Fuller type tests proposed for instance by \cite{df:1979} and the KPSS type tests proposed for
instance by \cite{kspss:1992} respectively. The first type of tests has unit root as the null hypothesis, while the second type of tests has stationarity as the null hypotesis. However, this
unit-root concept is specifically defined for linear autoregressive models with finite-variance disturbances. As a result, many of the existing tests based on the unit root concept is not always suitable for
examining generic stationarity or stability property of time-series processes.

There exist a few tests based on ideas more directly related to stationarity. In the time domain, \cite{xiao:lima:2007} for instance proposed a test which works against the alternatives with
time-varying second moments. Further tests have also been developed in the frequency domain, using spectral decomposition and wavelets. To name a few, we cite the pioneering work by
\cite{priestley:rao:1969}, followed by \cite{sachs:neumann:1999} and \cite{nason:2013}. Their approach can be regarded as a mixture of the analysis in the frequency domain and in the time domain, in
the sense that they check the constancy of the result of the spectral decomposition across time. \cite{dwivedi:rao:2010} constructed a test purely in the frequency domain by considering the
correlation of the discrete Fourier transform at the canonical frequencies.

In principle, all of the tests that we cited so far are tests for second-order stationarity, also known as ``weak'' stationarity. However, the tests in the time domain can be modified to test for
strict stationarity by incorporating information from different levels. This thread of works includes \cite{kapetanios:2007}, \cite{busetti:harvey:2010}, and \cite{lima:neri:2013}. Other tests for
strict stationarity rely on more specific assumptions such as Markov property. See, for example, \cite{domowitz:elgamal:2001} and \cite{kanaya:2011}. It should also be pointed out that researchers do not always draw a clear distinction
between tests that are designed to test for strict stationarity and tests that are designed to test for second-order stationarity, due to the logical, technical and historical links between these two concepts.

Stationarity tests for time series are unique relative to their counterparts for stochastic processes in general, where a number of independent or correlated paths are often available. For time
series, typically only one path (or realization) is available, and all of the conclusions about the time series must be drawn based on the information extracted from this single path. Thus, in some
sense, stationarity tests for time series transform stationarity very naturally from a distributional property to a path property, with each particular stationary test dividing the path space into a
``stationary/acceptance region'' and a ``non-stationary/rejection region''.

A careful reader would point out that the above argument is not sufficient to transform stationarity into a path property, since the same reasoning works for all of the properties for which time series tests exist. However, there is a fundamental difference between path properties and distributional properties in terms of the results produced by the time series tests. For a path property, such as monotonicity, exceedance to a threshold, \textit{etc}., assuming that we have a large enough data set, all of the ``reasonable'' tests should give similar results to a fixed path, since there is a definite answer to the question as whether the given path possesses this property. In contrast, different tests normally give different results if the property of interest in terms of a distributional, such as Gaussian or Markov, property. In this case, the answer will depend on the test used, or more precisely, the mechanism upon which the tests are constructed.

Logically, a stationarity test for time series should capture some ``essential'' properties possessed by ``typical'' (\textit{e.g.}, almost all) paths of stationary processes, and it should be used to verify whether the
given path has this property. Equivalently, the test can also be used to verify the existence of some traits which should be essentially absent in a stationary process, and utilize this result as a basis to reject
the null hypothesis of stationarity. Following the reasoning in the last paragraph, the critical question is, what properties are deemed to be ``essential'' in distinguishing between stationarity and non-stationarity, and whether we will obtain the same result for a given path when different properties are used for evaluation?

In principle, any property which is satisfied by all of the stationary processes with a higher probability than the non-stationary processes, or the opposite case, should work. There are so many of them, so that
it seems to be hopeless to come up with a clear idea about how such a property should look like. On the other hand, interestingly, it seems that we have a relatively clear notion about which paths are ``stationary'',
or more precisely, which are not. Let us consider the following examples:

Let $\mathbf{X}=\{X_n\}_{n\in\mathbb{N}_0}$ be a time series over an infinite time horizon, where $\mathbb{N}_0$ stands for the set of all non-negative integers. Let $H$ be the path space $\mathbb
R^{\mathbb{N}_0}$ equipped with the cylindrical $\sigma$-field.

\begin{example}
If $\mathbf{x}=\{x_n\}_{n\in\mathbb{N}_0}$ is strictly increasing, then the corresponding time series should not be stationary, since $P(\mathbf X \text{ is strictly incresing})=0$ for any stationary
time series $\mathbf X$.
\end{example}

\begin{example}
If there exists $k$ such that $x_k> \sup_{i\in{\mathbb N}_0, i\neq k}x_i$, then the time series should not be stationary. Intuitively, with probability 1, a stationary time series does not have a peak
which is never attainable again.
\end{example}

Given the above examples, it might be tempting to argue that since each path is special in a certain sense, it will be rejected for stationarity by some tests. In other words, the abundance of the
criteria which can be used for stationarity will result in an empty intersection for their acceptance regions in the path space. If this is the case, then stationarity should not be considered as a path property, because it means that the result of a stationarity test for a given path solely depends on the properties upon which the test is constructed. This, however, turns out not to be the case. In fact, there exist paths which should
not be excluded from stationarity in any case, as shown by the following examples.

\begin{example}
Let $\mathbf{x}=(c,c,...)$ be a sequence of constant $c\in\mathbb{R}$. Then one should not conclude that $\mathbf x$ is not stationary. Actually, if a stationarity test rejects such a path, then for
this constant stationary process, its type I error will be identically equal to 1.
\end{example}

\begin{example}
Let $\mathbf{x}=(x_0,x_1,...)$, where $x_n=\sin(n\theta+\varphi_0), n\in\mathbb{N}_0$. This is a wave with period $2\pi/\theta$ and phase $\varphi_0$, observed at integer times. Notice that if we make
$\varphi_0$ to be random and uniformly distributed on $[0,2\pi)$, then $\mathbf{x}$ becomes a stationary process. Therefore if we consider that all of the phases are equal in determining whether the path
$\mathbf{x}$ is stationary, which seems an irrefutable argument, then such $\mathbf{x}$ should not be rejected for stationarity when tested. This example extends to all of the periodic functions observed
at integers.
\end{example}

The examples above show how a strong, intuitive distinction between stationary and non-stationary paths exists in our mind, which enables us to tell the non-stationary paths from the stationary ones
even before we venture into finding an appropriate set of criteria to discriminate them. Thus such an intuitive distinction should be built upon some principles more fundamentally than the numerous
specific path properties such as monotonicity, the number of peaks, \textit{etc}.

The goal of our paper is to flash out these principles, and to show that they actually form the basis for most existing stationarity tests. In particular, there are three conditions underlying any
stationarity test. Roughly speaking, the first condition requires that for any
event of a certain type, if it happens once, it must happen infinitely many times along the path, with a positive limiting frequency; the second condition is a mild condition which prevents any non-negligible part of the path from escaping to infinity; and the third condition is more of a technical nature, and is
related to the ergodicity of the path.

The three conditions mentioned above identify a set of paths, denoted as set $A$. We show that this is exactly the set of all of the paths which should be classified as ``stationary''. We firstly
prove that the set $A$ is large enough, such that it contains almost all of the paths of any stationary process; then we show that the set $A$ is also small enough, such that it only includes those
paths which yield the best possible results under any given stationarity test. Thus, this justifies the idea that the notion of stationarity can be transformed profitably into a path property, and that the path
space can be divided into an ``essentially stationary'' part and its complement. These results also show how the three proposed conditions can usefully serve as a basis for our intuition about the distinction between
stationarity and non-stationarity, and provide a unified framework to understand and assess the existing stationarity tests.

The rest of the paper is organized as follows. In Section 2 we introduce the basic set-up and construct the set $A$ of all the ``stationary'' paths. Section 3 shows that the set $A$ is large enough to contain almost all of the paths for any stationary process. A practical criterion to check one of the conditions that defines $A$ is also established. Finally, in Section 4 we prove that $A$ is also small enough, so that any path in $A$ will be statistically indistinguishable with a typical path of certain stationary process, in the sense that it will behave optimally under any stationarity test satisfying some mild conditions.

\section{Basic Set-up}

Let $\mathbf{x}=\{x_n\}_{n\in\mathbb{N}_0}$ be a numerical sequence in $\mathbb R$. For $k\in\mathbb{N}$, define $\mathbf{I}=I_0\times...\times I_{k-1}\in{\mathcal I}^k$, where $\mathcal I$ is the
collection of open intervals on the real line. Define a set $S_k^\mathbf{I}=S_k^\mathbf{I}(\mathbf{x})$ of non-negative integers by
$$
S_k^{\mathbf I}(\mathbf{x}):=\{n\geq 0: x_n\in I_0,..., x_{n+k-1}\in I_{k-1}\}.
$$
Denote by $N_k^\mathbf{I}=\{N_k^\mathbf{I}(n)\}_{n\in\mathbb{N}}$ the counting function of $S_k^{\mathbf I}$. That is,
$$
N_k^\mathbf{I}(n)=|S_k^{\mathbf I}\cap[0,n-1]|,
$$
where $|\cdot|$ for a set gives the number of elements in a set. We say that \textbf{Property E} holds for $\mathbf{x}$, with parameters $k$ and $\mathbf I$, if the corresponding $N_k^\mathbf{I}$
satisfies that either $N_k^\mathbf{I}(\infty)=0$, or $\lim_{n\to\infty}\frac{N_k^\mathbf{I}(n)}{n}>0$.

Define the density of a set $S\subseteq\mathbb{N}_0$ as $\lim_{n\to\infty}\frac{|S\cap[0,n-1]|}{n}$ if the limit exists. Then Property E says that $S_k^\mathbf{I}$ either is empty or has a positive
density.

Let $A_0$ be the set of all the numerical sequences such that Property E holds for all $k\in\mathbb{N}$ and ${\mathbf I}\in {\mathcal I}^k$.

We further add a tightness condition, called \textbf{Property T}:
$$
\lim_{K\to\infty}\lim_{n\to\infty}\frac{1}{n}\sum_{i=1}^n\mathbf{1}_{[0,K)}(|x_i|)=\lim_{K\to\infty}\lim_{n\to\infty}\frac{N_1^{(-K,K)}(n)}{n}=1.
$$

Intuitively, Property T prevents the ``main part'' of the sequence from escaping to infinity. We call $A_1$ a subset of $A_0$ consisting of all of the sequences in $A_0$ which satisfies Property T.

Denote by $F^1_n, n\in\mathbb{N}$ the marginal empirical measures of a sequence $\mathbf{x}\in A_1$, determined by
$$
F^1_n(I)=\frac{N_1^I(n)}{n}, \quad I\in \mathcal{I}.
$$
The fact that $\mathbf{x}\in A_0$ implies that $\lim_{n\to\infty}F^1_n(I)$ always exists, Property T then guarantees that the sequence of measures $\{F_n^1\}_{n\in\mathbb{N}}$ is tight, and hence
$\lim_{n\to\infty}F^1_n(I)$ generates a probability measure. More generally, for any $k\in \mathbb{N}$, the $k$ dimensional empirical measure $F^k_n$ is defined by
$$
F^k_n(\mathbf{I})=\frac{N_k^{\mathbf{I}}(n)}{n}, \quad \mathbf{I}\in \mathcal{I}^k.
$$
It is easy to see that Property T also assures the tightness of any finite-dimensional empirical measures, and thus $\lim_{n\to\infty}F^k_n(\mathbf{I})$ generates a probability measure on $\mathbb{R}^k$.

Together, the family of limiting probability measures $\{\lim_{n\to\infty} F^k_n\}_{k\in\mathbb N}$ satisfies the consistency condition, and thus by Kolmogorov's existence theorem, there exists a
stationary process $\mathbf{Y}=\{Y_n\}_{n\in\mathbb{N}_0}$, such that any finite dimensional distribution of $Y$:
$$
F_{Y_0,...,Y_{k-1}}=\lim_{n\to\infty}F^k_n.
$$
The process $\mathbf{Y}=\mathbf{Y}^{\mathbf x}$ is unique in distribution since all of its finite dimensional distributions are completely determined by the empirical measures of the sequence $\mathbf
x$. We call $\mathbf{Y}^{\mathbf x}$ the stationary process induced by the numerical sequence $\mathbf{x}\in A_1$.

Define set
$$
A:=\{\mathbf{x}\in A_1: \mathbf{Y}^{\mathbf{x}} \text{ is ergodic }\}.
$$

Also, notice that to make $\mathbf{Y}^{\mathbf x}$ well-defined, we only need a weaker version of Property E, where $\lim_{n\to\infty}\frac{N_k^\mathbf{I}(n)}{n}$ exists for any $k\in \mathbb N$ and
$\mathbf I\in {\mathcal I}^k$, but $N_k^\mathbf{I}(\infty)>0$ does not necessarily imply $\lim_{n\to\infty}\frac{N_k^\mathbf{I}(n)}{n}>0$.\\

\section{Coverage by $A$ of Paths from Stationary Processes}

The following theorem shows that the set $A$ is large enough, so that every stationary time series puts mass $1$ on $A$.

\begin{theorem}
Let ${\mathbf X}=\{X_n\}_{n=0,1,...}$ be a stationary time series. Then $P(\mathbf X\in A)=1$.
\end{theorem}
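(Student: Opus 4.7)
The plan is to verify, $P$-almost surely for a stationary $\mathbf{X}$, each of the three conditions defining $A$: Property E for all $k$ and $\mathbf{I}$, Property T, and the ergodicity of the induced process $\mathbf{Y}^{\mathbf{x}}$. Birkhoff's ergodic theorem is the central tool, and because it is cleanest on ergodic measures, I would begin by invoking the ergodic decomposition: write $P=\int P_\omega\,d\lambda(\omega)$ with each $P_\omega$ stationary and ergodic, and reduce to showing $P_\omega(\mathbf{X}\in A)=1$ for $\lambda$-a.e.\ $\omega$.

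Assume then that $\mathbf{X}$ is ergodic with $k$-dimensional marginal $\mu_k$. The heart of the proof is to apply Birkhoff once and for all to a carefully chosen \emph{countable} collection of indicator functions, so that the resulting full-measure event $\Omega_0$ is large enough to support the statement for \emph{all} open rectangles simultaneously. A convenient choice is the collection of half-spaces $(-\infty,\mathbf{q}]\subset\mathbb{R}^k$ with $\mathbf{q}$ having rational coordinates, together with the singletons $\{a\}$ for $a$ in the countable set $D$ of atoms of the one-dimensional marginal CDF $F$. On $\Omega_0$, both the right- and left-continuous empirical CDFs converge pointwise on a set that is dense in $\mathbb{R}^k$ and already contains every atom of $\mu_k$; a monotonicity and sandwich argument then upgrades this to pointwise convergence at every point, and inclusion--exclusion yields $F_n^k(\mathbf{I})\to\mu_k(\mathbf{I})$ for every open rectangle $\mathbf{I}$.

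To finish Property E, I would intersect $\Omega_0$ with the further full-measure event on which $(X_n,\ldots,X_{n+k-1})$ lies in the support $K_k$ of $\mu_k$ for all $n$ and $k$ (a countable union of null sets). If $\mu_k(\mathbf{I})=0$ then $\mathbf{I}\cap K_k=\emptyset$, so $S_k^{\mathbf{I}}=\emptyset$ and the first alternative of Property E holds; if $\mu_k(\mathbf{I})>0$ then the previous paragraph gives a strictly positive limiting density. Property T is then immediate, since $F_n^1((-K,K))\to\mu_1((-K,K))\uparrow 1$ as $K\to\infty$. Finally, the finite-dimensional marginals of $\mathbf{Y}^{\mathbf{x}}$ are by construction the $\mu_k$, so $\mathbf{Y}^{\mathbf{x}}\eqd\mathbf{X}$ and $\mathbf{Y}^{\mathbf{x}}$ inherits ergodicity from $\mathbf{X}$.

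The main obstacle is the simultaneity of Birkhoff convergence over the uncountable class of open rectangles. Applying Birkhoff rectangle by rectangle only produces a null exceptional set that depends on $\mathbf{I}$, and when a face of $\mathbf{I}$ sits on an atom of $F$ the boundary $\partial\mathbf{I}$ can carry positive $\mu_k$-mass, so a purely weak-convergence sandwich through rational rectangles does not pin down the limit. The remedy, as above, is to enlarge the countable family so that atoms are handled directly, effectively upgrading Birkhoff to a Glivenko--Cantelli-type theorem for stationary ergodic sequences; I expect this uniformization to carry the bulk of the technical work.
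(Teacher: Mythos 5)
Your proposal is correct and follows essentially the same route as the paper: reduce to the ergodic case by ergodic decomposition, apply the pointwise ergodic theorem to a countable family enlarged to include the atoms of the one-dimensional marginal distribution, and pass to all open rectangles by a monotone sandwich argument. The paper packages the last step as a proof by contradiction (non-convergence for an interval $(a,b)$ would force $b$ to be an atom) and handles the empty-or-positive-density dichotomy by inner approximation rather than your support argument, but these are presentational differences of the same proof.
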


\begin{proof}
Firstly, by ergodic decomposition, it suffices to prove the result for the case where $\mathbf X$ is ergodic. Moreover, for ergodic process $\mathbf X$, once we prove that $P(\mathbf X\in A_0)=1$, it
follows immediately that $P(\mathbf X\in A)=1$ as well, since Property T and the ergodicity of the path are guaranteed by the pointwise ergodic theorem. Thus it suffices to prove that $P(\mathbf X\in
A_0)=1$.

The fact that Property E holds for any fixed $k$ and any single $\mathbf I$ almost surely is a trivial consequence of the pointwise ergodic theorem. As a result, Property E also holds for any
countable set of $(k,\mathbf I)$ almost surely. In the rest of the proof, for ease of notation, we will focus on the case where $k=1$, and prove that Property E holds for all $I\in \mathcal I$ almost
surely. The cases for $k>1$ follow in a similar way.

Let $F_1$ be the marginal distribution of $X_k$ for any $k=0,1,...$. Denote by $D_1$ the set of atoms of $F_1$:
$$
D_1=\{a\in \mathbb R: F_1(\{a\})>0\},
$$
and $D=D_1\cup \mathbb Q\cup\{-\infty,\infty\}$, then both $D_1$ and $D$ are at most countable sets. Hence the set
$$
A_2:=\{{\mathbf x}\in {\mathbb R}^{\mathbb N}: \text{ Property E holds for } k=1 \text{ and any } I=(a,b), a,b\in D\}
$$
satisfies $P({\mathbf X}\in A_2)=1$. Thus from now on we can assume that the paths are in $A_2$.

For any open interval $(a,b)$, there exists an increasing sequence of open intervals $\{(a_i,b_i)\}_{i=1,2,...}$, such that $a_i, b_i\in D$ for $i=1,2,...$, and
$(a,b)=\cup_i(a_i,b_i)=\lim_{i\to\infty}(a_i,b_i)$. Let the corresponding sets be $S$ and $S_i$, and the corresponding counting functions be $N(n)$ and $N_i(n)$. By construction, $S=\lim_{i\to\infty}
S_i$, and $N(n)=\lim_{i\to\infty}N_i(n)$ for $n\in\mathbb N$. Suppose $N(\infty)>0$ but $\lim_{n\to\infty}\frac{N(n)}{n}=0$ for some path in $A_2$, then for $i$ large enough, we also have
$N_i(\infty)>0$ and $\lim_{n\to\infty}\frac{N_i(n)}{n}=0$, which contradicts the construction of $A_2$. Therefore the only possibility that a path $\mathbf x$ is in $A_2\setminus A$ is that the
corresponding ratio $\frac{N(n)}{n}$ does not admit a limit as $n\to\infty$.

By the pointwise ergodic theorem, for any fixed open interval $I$, we have
$$
\frac{\sum_{i=0}^{n-1}{\mathbf 1}_{\{x_i\in I\}}}{n}\to E({\mathbf 1}_{\{X_0\in I\}})=P(X_0\in I)
$$
almost surely. Thus if we define the set
$$
B:=\{{\mathbf x}: \frac{\sum_{i=0}^{n-1}{\mathbf 1}_{\{x_i\in I\}}}{n}\to P(X_0\in I)\text{ for all } I=(a,b), a,b\in D\},
$$
then $P(B)=P(A_2\cap B)=1$. As a result, we can almost surely assume that $\mathbf x\in A_2\cap B$.

Suppose that for such an $\mathbf x$ and for an open interval $I=(a,b), a,b\in{\overline{\mathbb R}}$, the corresponding ratio $\frac{N(n)}{n}$ does not admit a limit as $n\to\infty$. Without loss of
generality, assume that $a\in D$ and $b\notin D$. The case where $a\notin D$, $b\in D$ and $a\notin D$, $b\notin D$ are similar. The non-existence of the limit implies that
$$
u:=\limsup_{n\to\infty}\frac{N(n)}{n}\neq\liminf_{n\to\infty}\frac{N(n)}{n}=:d.
$$
By definition, for any $b'\in D\cap (a,b)$,
\begin{equation}\label{ieq:lower}
\lim_{n\to\infty}\frac{\sum_{i=0}^{n-1}\mathbf{1}_{\{x_i\in(a,b')\}}}{n}\leq\liminf_{n\to\infty}\frac{N(n)}{n}=d.
\end{equation}
On the other hand, for $b''\in D\cap(b,\infty)$,
\begin{equation}\label{ieq:upper}
\lim_{n\to\infty}\frac{\sum_{i=0}^{n-1}{\mathbf 1}_{\{x_i\in(a,b'']\}}}{n}\geq \limsup_{t\to\infty}\frac{N(n)}{n}=u.
\end{equation}
The limit above exists because
$$
\sum_{i=0}^{n-1}{\mathbf 1}_{\{x_i\in(a,b'']\}}=\sum_{i=0}^{n-1}{\mathbf 1}_{\{x_i\in(a,\infty)\}}-\sum_{i=0}^{n-1}{\mathbf 1}_{\{x_i\in(b'',\infty)\}}.
$$
Subtracting (\ref{ieq:lower}) from (\ref{ieq:upper}), we have
$$
\lim_{n\to\infty}\frac{\sum_{i=0}^{n-1}{\mathbf 1}_{\{x_i\in[b',b'']\}}}{n}\geq u-d>0
$$
for any $b',b''\in D$ and $b'<b<b''$. Recall that since we work with $A_2\cap B$, this also implies that
$$
P(X_0\in[b',b''])\geq u-d.
$$
Because $D$ is dense in $\mathbb R$, we can take $b'\uparrow b$ and $b''\downarrow b$, leading to the result
$$
P(X_0=b)\geq u-d>0.
$$
However, since $b\notin D$, $b$ is not an atom of $F_1$. Thus $P(X_0=b)=0$, which is a contradiction. Hence the assumption is almost surely false and the limit exists with probability 1.
\end{proof}

In reality, checking the ergodicity of $\mathbf{Y}^{\mathbf{x}}$ for a given $\mathbf{x}$ by definition firstly requires us to fully recover the distribution of $\mathbf{Y}^{\mathbf{x}}$ from
$\mathbf{x}$, then determine whether the process $\mathbf{Y}^{\mathbf{x}}$ is ergodic given its distribution. Unfortunately none of these two steps is practical. However for a given sequence
$\mathbf{x}$, we can derive an equivalent characterization of the ergodicity, which is directly built upon the behavior of the sequence rather than the property of the measure it induces.

\begin{definition}
An \textbf{asymptotically proportional contraction} of the index set ${\mathbb N}_0$ is a subset $G$ of ${\mathbb N}_0$ consisting of disjoint intervals $G_i$ of consecutive integers:
$$
G=\cup_{i=1}^\infty G_i,
$$
satisfying
\begin{enumerate}
\item $G_i, i\in\mathbb N$ are increasingly ordered. That is, $\min\{n:n\in G_{i+1}\}>\max\{n:n\in G_i\}$, $i\in\mathbb N$; \item $|G_i|\to\infty$ as $i\to\infty$, where $|\cdot|$ is the
    number of elements (integers) in a set; \item $\frac{|[0,n-1]\cap G|}{n}\to c>0$ as $n\to\infty$.
\end{enumerate}
\end{definition}

\begin{definition}
An \textbf{asymptotically proportional contraction} of a numerical sequence $\mathbf{x}=\{x_n\}_{n=0,1,...}$ is a subsequence $\{x_{n_i}\}_{n_i\in G}$ of $\{x_n\}_{n\in{\mathbb N}_0}$, where $G$
is an asymptotically proportional contraction of the index set $\mathbb{N}_0$.
\end{definition}
Intuitively, an asymptotically proportional contraction of a numerical sequence consists of pieces of the original sequence with length of the pieces going to infinity and the fraction of coverage
converging to a fixed positive level.

\begin{theorem}\label{th:contraction}
Let $\mathbf x$ be a numerical sequence in $A_1$. Then $\mathbf x\in A$ if and only if all of its asymptotically proportional contractions induce the same process as the original sequence. That is,
for any asymptotically proportional contraction $\mathbf x'$, $k\in\mathbb N$ and $\mathbf{ I}\in\mathcal{I}^k$,
$$
\lim_{n\to\infty}\frac{{N'}_k^{\mathbf{I}}(n)}{n}=\lim_{n\to\infty}\frac{N_k^\mathbf{I}(n)}{n},
$$
where $N'$ is the counting function defined in the same way as previously but for the subsequence $\mathbf x'$.
\end{theorem}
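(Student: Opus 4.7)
The plan is to prove the two directions separately, both hinging on a decomposition of the $k$-dimensional empirical measure of $\mathbf{x}$ across the partition $(G, G^c)$.

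\textbf{Forward direction ($\mathbf{x} \in A$ implies every contraction induces the same process).} Let $G = \bigcup_i G_i$ with density $c$ induce the measure $\mu'$, and let $\mu$ denote the distribution of $\mathbf{Y}^{\mathbf{x}}$. I would split $N_k^{\mathbf{I}}(n)$ according to whether the $k$-window starting at $j$ lies entirely in $G$, entirely in $G^c$, or straddles a boundary. Since $|G_i| \to \infty$, the number of pieces meeting $[0,n)$ is $i_n = o(n)$, so the boundary term contributes $o(n)$. After dividing by $n$ and passing to a subsequential limit (using tightness from Property~T), I obtain
\[
\mu_k = c\,\mu'_k + (1-c)\,\nu_k,
\]
where $\nu$ is the limiting ``complementary'' empirical distribution. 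The same $o(n)$ bound shows $\nu_k$ is a probability measure (short pieces of $G^c$ contribute at most $O(k\,i_n)=o(n)$), and a one-index shift of the counting function verifies shift-invariance of $\nu$. Thus $\mu$ is a convex combination of two stationary measures, and extremality of ergodic $\mu$ among stationary measures forces $\mu' = \mu$.

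\textbf{Converse direction: the contrapositive.} Assuming $\mu$ is not ergodic, I would construct a contraction with a distinct induced measure. Pick a bounded function $f$ of finitely many coordinates so that $h := E[f \mid \mathcal{J}]$ (with $\mathcal{J}$ the shift-invariant $\sigma$-field) is not $\mu$-a.s.\ constant, and fix a continuity point $\theta$ of the law of $h$ with $\mu(h \leq \theta) \in (0,1)$ and $\theta \neq \int f\,d\mu$. For the sliding averages $g_L(T^j \mathbf{x}) = L^{-1}\sum_{\ell=0}^{L-1} f(T^{j+\ell}\mathbf{x})$, two observations drive the construction: (i) for fixed $L$, as $n\to\infty$ the empirical law of $\{g_L(T^j\mathbf{x})\}_{j<n}$ converges to the law of $g_L$ under $\mu$, which approaches the law of $h$ as $L \to \infty$; (ii) $g_L$ is $O(1/L)$-Lipschitz in $j$, so any index with $g_L < \theta-\varepsilon$ lies in an excursion of $\{g_L \leq \theta\}$ of length $\Omega(\varepsilon L)$.

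Using a sequence $L_m \to \infty$ and partitioning $\mathbb{N}_0$ into successive ranges $[A_{m-1},A_m)$ chosen so that the empirical distributions inside each range are close to their limits, I would build $G$ by collecting, in range $m$, positions lying in excursions of $\{j : g_{L_m}(T^j\mathbf{x}) \leq \theta\}$ of length at least $L_m/2$. The density of such long excursions stays bounded below by some $\alpha' > 0$ (short excursions are controlled via (ii) and the empirical convergence from (i)), and piece lengths in range $m$ are at least $L_m/2 \to \infty$; thus $G$ is an asymptotically proportional contraction. By construction, if $\mu'$ is the induced distribution of the contraction then $\int f\,d\mu' \leq \theta \neq \int f\,d\mu$, so $\mu' \neq \mu$. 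The main obstacle is this converse: arranging $G$ so that piece lengths genuinely diverge, density remains positive, and the sliding-window bias transfers to the induced measure. The interplay between Lipschitz-in-$j$ control of $g_L$ and the growth of $L$ across successive ranges is essential, and the short-excursion and range-boundary contributions must all be shown to be negligible in the limit.
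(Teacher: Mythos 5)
Your forward direction takes a genuinely different route from the paper and is essentially workable: the paper first proves a quantitative lemma (Lemma~3.4: for every $\epsilon>0$ there is an $N$ such that the set of starting points of length-$N$ windows whose empirical frequency of $S_k^{\mathbf I}$ deviates from $p_k^{\mathbf I}$ by more than $\epsilon$ has density $<\epsilon$) and transfers it to the contraction, whereas you use the soft identity $\mu=c\mu'+(1-c)\nu$ plus extremality of ergodic measures among stationary ones. That is a clean alternative, but note one detail you cannot wave away: extremality gives $\mu'_\ast=\mu$ only as a subsequential \emph{weak} limit, and the theorem asserts $\lim_n {N'}_k^{\mathbf I}(n)/n=p_k^{\mathbf I}$ for \emph{every} open rectangle $\mathbf I$, including those whose boundary carries $\mu$-mass. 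Recovering the scalar limit requires a two-sided squeeze using the exact counting identity together with $\liminf_n\mu'_n(\mathbf I)\ge\mu(\mathbf I)$ and $\liminf_n\nu_n(\mathbf I)\ge\mu(\mathbf I)$; this is precisely the boundary issue that occupies most of the paper's proof of Theorem~3.1, so it deserves more than a parenthesis.

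The genuine gap is in the converse. Your selection rule keeps excursions of $\{j: g_{L_m}(T^j\mathbf x)\le\theta\}$ of length at least $L_m/2$, and then claims the induced measure satisfies $\int f\,d\mu'\le\theta$. But sliding-window control does not transfer to piece averages at this length scale: tiling an excursion $[a,b)$ by disjoint windows of length $L_m$ only gives
$$
\frac{1}{b-a}\sum_{i=a}^{b-1}f(T^i\mathbf x)\;\le\;\theta+\frac{L_m}{b-a}\,\|f\|_\infty ,
$$
and with $b-a\ge L_m/2$ the error term is $O(1)$, not $o(1)$ (for $b-a<L_m$ the tiling gives nothing at all, and the tail of an excursion of length $\Theta(L_m)$ can genuinely carry $f$-density close to $1$ while every window starting in it still averages below $\theta$, because those windows extend outside the excursion). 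To make the bias survive you need piece lengths $\gg L_m$, and neither observation (i) nor (ii) supplies a uniform-in-$L$ positive lower bound on the density of excursions that long — that is essentially the hard part of the theorem, not a corollary of the Lipschitz estimate. The paper avoids the issue entirely by selecting windows $[j,j+m)$ directly by the condition that the empirical frequency of $S_k^{\mathbf I}$ \emph{over that exact window} exceeds a level strictly between $pz+(1-p)w$ and $z$; a Markov-type bound applied to the mixture $F_{\mathbf Y}=pF_{\mathbf Z}+(1-p)F_{\mathbf W}$ shows the density of such $j$ is at least $\frac{p(1-p)(z-w)}{2}$ \emph{uniformly in} $m$, and the kept pieces then carry the biased average by construction. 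Two smaller omissions: an asymptotically proportional contraction requires the coverage fraction to \emph{converge} to a positive constant, so the selected sets must be trimmed to a common density across scales (the paper's step $V_1\to V_2$); and the pieces selected at scale $m$ must be spliced with those at scale $m+1$ without overlap, which is what the paper's cut points $N(m)$ arrange.
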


To prove Theorem \ref{th:contraction}, let us firstly introduce the following lemma. A similar result was presented in \cite{furstenberg:1960}. However, the proof to be presented below is much
simpler, due to the difference in the framework used in this paper and that used in \cite{furstenberg:1960}, and the fact that we only need a one-directional result.

\begin{lemma}\label{l:density}
Let $\mathbf x$ be a path in $A$, therefore $\mathbf{Y}^{\mathbf x}$ be ergodic. Let $k\in\mathbb N$, $\mathbf{I}=I_0\times ...\times I_{k-1}\in{\mathcal{I}}^k$ and
$S_k^\mathbf{I}=S_k^\mathbf{I}(\mathbf x)$ be defined as previously. Then for every $\epsilon>0$, there is an $N$, such that the set
$$
R_{k,N}^\mathbf{I}:= \left\{ n\in\mathbb{N}: \left|\frac{1}{N}\sum_{i=n}^{n+N-1}\mathbf{1}_{S_k^\mathbf{I}}(i)-p_k^\mathbf{I}\right|>\epsilon \right\}
$$
has a density smaller than $\epsilon$, where the constant $p_k^\mathbf{I}=P(Y^\mathbf{x}_0\in I_0,...,Y^\mathbf{x}_{k-1}\in I_{k-1})$.
\end{lemma}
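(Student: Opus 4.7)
My plan is a second moment argument that reduces the assertion to the $L^2$ mean ergodic theorem for $\mathbf{Y}^{\mathbf{x}}$. Write $f_i := \mathbf{1}_{S_k^{\mathbf{I}}}(i)$, $p := p_k^{\mathbf{I}}$, and $g_n := \frac{1}{N}\sum_{i=n}^{n+N-1}f_i - p$. Chebyshev's inequality gives
$$
\limsup_{M\to\infty}\frac{|R_{k,N}^{\mathbf{I}}\cap[0,M-1]|}{M}\le \frac{1}{\epsilon^2}\limsup_{M\to\infty}\frac{1}{M}\sum_{n=0}^{M-1}g_n^2,
$$
so it suffices to choose $N$ such that the right-hand side is smaller than $\epsilon^3$.

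Next I would expand $g_n^2$ and swap the sums in $n$ and in the two internal windows. For each pair $(a,b) \in \{0,\ldots,N-1\}^2$, the product $f_{n+a}f_{n+b}$ is the indicator of a rectangular event in at most $|b-a|+k$ consecutive coordinates of $\mathbf{x}$, and thus falls within the class of events for which Property E and the construction of $\mathbf{Y}^{\mathbf{x}}$ in Section 2 supply empirical convergence. This yields $\lim_{M\to\infty} \frac{1}{M}\sum_n f_{n+a}f_{n+b} = q_{|b-a|}$, where
$$
q_d := P\big(Y_0 \in I_0,\ldots, Y_{k-1}\in I_{k-1},\; Y_d\in I_0,\ldots, Y_{d+k-1}\in I_{k-1}\big)
$$
with $Y_j = Y_j^{\mathbf{x}}$. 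Similar boundary-effect bookkeeping on the cross term leaves
$$
\lim_{M\to\infty}\frac{1}{M}\sum_{n=0}^{M-1} g_n^2 \;=\; \frac{1}{N^2}\sum_{a,b=0}^{N-1} q_{|b-a|} \;-\; p^2.
$$

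Ergodicity enters at the final step. Applying the $L^2$ mean ergodic theorem to the indicator $\mathbf{1}_A$ of $A:=\{Y_0\in I_0,\ldots,Y_{k-1}\in I_{k-1}\}$ on the shift dynamical system of $\mathbf{Y}^{\mathbf{x}}$, ergodicity identifies the time-average limit with the constant $p$ in $L^2$; taking inner product with $\mathbf{1}_A$ then gives $\frac{1}{N}\sum_{d=0}^{N-1} q_d \to p^2$. A routine manipulation (the triangular weights $(N-|d|)/N^2$ preserve Cesaro limits) upgrades this to $\frac{1}{N^2}\sum_{a,b=0}^{N-1}q_{|b-a|} \to p^2$. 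Choosing $N$ large enough that this double average lies within $\epsilon^3$ of $p^2$ and combining with the Chebyshev bound closes the argument.

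The only step requiring real care is justifying, cleanly from $\mathbf{x}\in A_1$, the convergence $\frac{1}{M}\sum_n f_{n+a}f_{n+b} \to q_{|b-a|}$ used in the expansion. This is bookkeeping rather than analysis, because the underlying event is a finite-dimensional open rectangle (collapsing to the empty set whenever some coordinate intersection $I_j\cap I_{j-d}$ is empty), and such rectangles are precisely the class for which the empirical measures of $\mathbf{x}$ were built to converge to the finite-dimensional laws of $\mathbf{Y}^{\mathbf{x}}$. Everything else is standard ergodic-theoretic decoupling.
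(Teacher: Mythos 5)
Your proof is correct, but it takes a genuinely different route from the paper's. The paper argues by contradiction via the \emph{pointwise} ergodic theorem: if the conclusion failed for some $\epsilon$ and every $N$, then for infinitely many $N$ one of the two one-sided deviation sets would have density at least $\epsilon/2$ along the path; the ergodicity of the path converts these densities into probabilities for $\mathbf{Y}^{\mathbf{x}}$, and a reverse-Fatou step produces an event of probability at least $\epsilon/2$ on which the Birkhoff averages of $\mathbf{1}_{A_k^{\mathbf I}}$ fail to converge to $p_k^{\mathbf I}$, contradicting Birkhoff's theorem. You instead give a direct second-moment argument: Chebyshev along the path plus the $L^2$ mean ergodic theorem to force $\frac{1}{N^2}\sum_{a,b}q_{|b-a|}-p^2$ below $\epsilon^3$. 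Each approach buys something. Yours only ever invokes empirical convergence for finite-dimensional \emph{open rectangles} (the product $f_{n+a}f_{n+b}$ is the indicator of such a rectangle, each coordinate carrying $I_j$, $I_j\cap I_{j-d}$, or $\mathbb{R}$), which is precisely what membership in $A_0$ and the construction of $\mathbf{Y}^{\mathbf{x}}$ supply; the paper's proof instead has to identify the density of $R_{k,N}^{\mathbf I}$ itself --- a set defined through indicator \emph{patterns}, hence through complements of rectangles --- with the probability of the corresponding event, a step it attributes to ``ergodicity of the path'' and which really requires an inclusion--exclusion over the $2^N$ patterns to reduce to rectangles. The one thing your argument does not deliver is the \emph{existence} of the density of $R_{k,N}^{\mathbf I}$: Chebyshev only bounds its upper density by $\epsilon$, while the lemma as stated asserts a density smaller than $\epsilon$. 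That existence is exactly the inclusion--exclusion point just mentioned (it holds for every $N$, independently of the bound), so you should add a sentence to that effect; note, though, that only the upper-density bound is actually used in the proof of Theorem \ref{th:contraction}, so your version of the lemma already suffices for its intended application.
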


\begin{proof}
Notice that the existence of the density for the sets $R_{k,N}^\mathbf{I}$:
 $$
 \lim_{n\to\infty}\frac{\sum_{m=0}^{n-1}\mathbf{1}_{R_{k,N}^\mathbf{I}}(m)}{n}
 $$
 is guaranteed by Property E. Moreover, by the ergodicity of the path, the density of a set $R_{k,N}^\mathbf{I}$ is exactly the probability of the corresponding event, namely,
 \begin{align*}
  & \lim_{n\to\infty}\frac{\sum_{m=0}^{n-1}\mathbf{1}_{ R_{k,N}^\mathbf{I}}(m)}{n}\\
  = & P\left(\left|
  \frac{1}{N}\sum_{i=0}^{N-1}\prod_{j=0}^{k-1}\mathbf{1}_{I_j}(Y^\mathbf{x}_{i+j})-p_k^\mathbf{I}
  \right|>\epsilon\right)\\
  = & P\left(\left|\frac{1}{N}\sum_{i=0}^{N-1}\mathbf{1}_{\{\theta^i\circ \mathbf{Y}\in A_k^\mathbf{I}\}}-p_k^\mathbf{I}\right|>\epsilon\right),
 \end{align*}
where $\theta$ is the shift operator, and $A_k^\mathbf{I}$ is a subset of the path space $H$, defined as
$$
A_k^\mathbf{I}=\{\mathbf{x}\in H: x_i\in I_i, i=0,...,k-1\}.
$$

Assume that the result in Lemma \ref{l:density} is not true. Then there is $\epsilon>0$, such that for any $N\in\mathbb N$, either $ \left\{ n\in\mathbb{N}:
\frac{1}{N}\sum_{i=n}^{n+N-1}\mathbf{1}_{S_k^\mathbf{I}}(i)-p_k^\mathbf{I}>\epsilon \right\} $ or $ \left\{ n\in\mathbb{N}:
\frac{1}{N}\sum_{i=n}^{n+N-1}\mathbf{1}_{S_k^\mathbf{I}}(i)-p_k^\mathbf{I}<-\epsilon \right\} $ has a density which is greater or equal to $\frac{\epsilon}{2}$. Without loss of generality, assume that
the set
$$ \left\{ n\in\mathbb{N}: \frac{1}{N}\sum_{i=n}^{n+N-1}\mathbf{1}_{S_k^\mathbf{I}}(i)-p_k^\mathbf{I}>\epsilon \right\}
$$
has a density greater or equal to $\frac{\epsilon}{2}$ for infinitely
many $N\in \mathbb N$, denoted as $\{N_i\}_{i\in\mathbb N}$. By ergodicity of the path $\mathbf x$, this implies that
$$
P\left(\frac{1}{N_i}\sum_{j=0}^{N_i-1}\mathbf{1}_{\{\theta^j\circ \mathbf{Y}\in A_k^\mathbf{I}\}}>p_k^\mathbf{I}+\epsilon\right)\geq \frac{\epsilon}{2}
$$
for $i\in\mathbb N$. As a result, the event
$$
\left\{ \frac{1}{N}\sum_{j=0}^{N-1}\mathbf{1}_{\{\theta^j\circ \mathbf{Y}\in A_k^\mathbf{I}\}}>p_k^\mathbf{I}+\epsilon \quad\text{for infinitely many } N \right\}
$$
has a probability greater or equal to $\frac{\epsilon}{2}$. This implies that
$$
\limsup_{n\to\infty}\frac{1}{n}\sum_{j=1}^n\mathbf{1}_{\{\theta^j\circ \mathbf{Y}\in A_k^\mathbf{I}\}}\geq p_k^\mathbf{I}+\epsilon
$$
happens with a probability greater or equal to $\frac{\epsilon}{2}$.

However, since $\mathbf{Y}$ is ergodic,
$$
\lim_{n\to\infty}\frac{1}{n}\sum_{j=1}^n\mathbf{1}_{\{\theta^j\circ \mathbf{Y}\in A_k^\mathbf{I}\}}=p_k^\mathbf{I}
$$
almost surely, which is a contradiction. Therefore we conclude that the assumption is invalid and the result in Lemma \ref{l:density} holds.
\end{proof}

\begin{proof}[Proof of Theorem \ref{th:contraction}]
Assume $\mathbf{x}\in A$. For $k\in\mathbb N$, $\mathbf{I}=I_0\times...\times I_{k-1}\in\mathcal{I}^k$, define $S^\mathbf{I}_k(\mathbf{x})$ as previously. Let $\mathbf{x}'=\{x_{n_i}\}_{n_i\in G}$ be
an asymptotically proportional contraction of $\mathbf{x}$, where $G=\cup_{i}G_i$ is the corresponding asymptotically proportional contraction of $\mathbb N_0$. To prove the ``only if'' direction,
our goal is to prove that the set $S^\mathbf{I}_k(\mathbf{x}')$ has the same density as $S^\mathbf{I}_k(\mathbf{x})$. Let $c=\lim_{n\to\infty}\frac{|[0,n-1]\cap G|}{n}$. By Lemma \ref{l:density}, for
any $\epsilon>0$, there exists $N$, such that the set
$$
R_{k,N}^\mathbf{I}= \left\{ n\in\mathbb{N}_0: \left|\frac{1}{N}\sum_{j=n}^{n+N-1}\mathbf{1}_{S_k^\mathbf{I}(\mathbf{x})}(j)-p_k^\mathbf{I}\right|>\epsilon \right\}
$$
has a density smaller than $\epsilon$. Hence, the upper density of $R_{k,N}^\mathbf{I}$ in $G$, defined as
$$
\limsup_{n\to\infty}\frac{|R_{k,N}^\mathbf{I}\cap[0,n-1]\cap G|}{|[0,n-1]\cap G|},
$$
is smaller than $\frac{\epsilon}{c}$. Similar to $R_{k,N}^\mathbf{I}$, one can define
$$
{R'}_{k,N}^\mathbf{I}:=\left\{n_i\in G: \left|\frac{1}{N}\sum_{j=i}^{i+N-1}\mathbf{1}_{S_k^\mathbf{I}(\mathbf{x}')}(j)-p_k^\mathbf{I}\right|>\epsilon\right\}.
$$
Since the operation of contraction will join different segments of the original path together, ${R'}_{k,N}^\mathbf{I}$ and ${R}_{k,N}^\mathbf{I}$ will not completely agree in $G$. However, since
$\lim_{n\to\infty}|G_n|=\infty$, the two sets will have the same upper density in $G$. Therefore, the upper density of ${R'}_{k,N}^\mathbf{I}$ is also smaller than $\frac{\epsilon}{c}$. It is easy to see
that
\begin{align*}
& \limsup_{n\to\infty}\frac{1}{n}\sum_{i=0}^{n-1}\mathbf{1}_{S_k^\mathbf{I}(\mathbf{x}')}(i)\\ \leq & \limsup_{n\to\infty}\frac{|{R'}_{k,N}^\mathbf{I}\cap[0,n-1]\cap G|}{|[0,n-1]\cap G|}\cdot 1+
1\cdot (p_k^\mathbf{I}+\epsilon)\\ \leq & p_k^\mathbf{I}+\epsilon\left(1+\frac{1}{c}\right).
\end{align*}
Since $\epsilon$ can be arbitrarily small, we must have
$$
\limsup_{n\to\infty}\frac{1}{n}\sum_{i=0}^{n-1}\mathbf{1}_{S_k^\mathbf{I}(\mathbf{x}')}(i)\leq p_k^\mathbf{I}.
$$
Symmetrically, $\liminf_{n\to\infty}\frac{1}{n}\sum_{i=0}^{n-1}\mathbf{1}_{S_k^\mathbf{I}(\mathbf{x}')}(i)\geq p_k^\mathbf{I}$. Thus
$$
\lim_{n\to\infty}\frac{1}{n}\sum_{i=0}^{n-1}\mathbf{1}_{S_k^\mathbf{I}(\mathbf{x}')}(i)= p_k^\mathbf{I},
$$
which shows that $S_k^\mathbf{I}(\mathbf{x}')$ always has the same density, which is also the density of $S_k^\mathbf{I}(\mathbf{x})$.\\

Conversely, assume that $\mathbf{x}\in A_1$ but $\mathbf{x}\notin A$. Thus $\mathbf{x}$ induces a stationary process $\mathbf{Y}=\mathbf{Y}^{\mathbf{x}}$, but it is not ergodic. Therefore there exists
$p\in(0,1)$ and stationary processes $\mathbf{Z}$ and $\mathbf{W}$ with distinct distributions, such that $F_{\mathbf{Y}}=pF_{\mathbf{Z}}+(1-p)F_{\mathbf{W}}$. In particular, there exists
$k\in\mathbb N$ and $\mathbf{I}=I_0\times...\times I_{k-1}\in\mathcal{I}^k$, such that $z:=P(Z_i\in I_i, i=0,...,k-1)\neq P(W_i\in I_i, i=0,...,k-1)=:w$. Without loss of generality, assume that $z>w$.
Notice that since $\mathbf{x}$ induces $\mathbf{Y}$,
$$
\lim_{n\to\infty}\frac{|S_k^{\mathbf{I}}(\mathbf{x})\cap [0,n-1]|}{n}=P(Y_i\in I_i, i=0,...,k-1)=pz+(1-p)w.
$$

For $m\in\mathbb{N}$, define
$$
V_0:=\left\{j\in\mathbb{N}_0: \frac{|S_k^\mathbf{I}(\mathbf{x})\cap [j,j+m-1]|}{m}\geq \frac{(1+p)z+(1-p)w}{2}\right\}.
$$

Intuitively, $V_0$ is the set of the starting points of the segments of length $m$ in $\mathbf{x}$, for which the local density of the points in $S_k^\mathbf{I}(\mathbf{x})$ is higher than or equal to
$\frac{(1+p)z+(1-p)w}{2}$, which is a level between $z$ and $pz+(1-p)w$. It is clear by the construction of $A_0$ that $V_0$ has a density.

Consider process $\mathbf{Z}$. Similar to $\mathbf{x}$, we now have a random set
$$
S_k^\mathbf{I}(\mathbf{Z})=\{n\geq 0: Z_{n+i}\in I_i, i=0,...,k-1\}.
$$
Then
\begin{align*}
z=&P(Z_i\in I_i, i=0,...,k-1)\\ =&E(\mathbf{1}_{S_k^\mathbf{I}(\mathbf{Z})}(0))\\ =&E\left(\frac{1}{m}\sum_{j=0}^{m-1}\mathbf{1}_{S_k^\mathbf{I}(\mathbf{Z})}(j)\right)\\
=&E\left(\left.\frac{1}{m}\sum_{j=0}^{m-1}\mathbf{1}_{S_k^\mathbf{I}(\mathbf{Z})}(j)\right|\frac{1}{m}\sum_{j=0}^{m-1}\mathbf{1}_{S_k^\mathbf{I}(\mathbf{Z})}(j)\geq \frac{(1+p)z+(1-p)w}{2} \right)\\
&\cdot P\left(\frac{1}{m}\sum_{j=0}^{m-1}\mathbf{1}_{S_k^\mathbf{I}(\mathbf{Z})}(j)\geq \frac{(1+p)z+(1-p)w}{2}\right)\\
+&E\left(\left.\frac{1}{m}\sum_{j=0}^{m-1}\mathbf{1}_{S_k^\mathbf{I}(\mathbf{Z})}(j)\right|\frac{1}{m}\sum_{j=0}^{m-1}\mathbf{1}_{S_k^\mathbf{I}(\mathbf{Z})}(j)< \frac{(1+p)z+(1-p)w}{2} \right)\\
&\cdot P\left(\frac{1}{m}\sum_{j=0}^{m-1}\mathbf{1}_{S_k^\mathbf{I}(\mathbf{Z})}(j)< \frac{(1+p)z+(1-p)w}{2}\right)\\ \leq
&P\left(\frac{1}{m}\sum_{j=0}^{m-1}\mathbf{1}_{S_k^\mathbf{I}(\mathbf{Z})}(j)\geq \frac{(1+p)z+(1-p)w}{2}\right)\\ &+\frac{(1+p)z+(1-p)w}{2}.
\end{align*}
Hence, we have
\begin{align*}
&P\left(\frac{|S_k^\mathbf{I}(\mathbf{Z})\cap[0,m-1]|}{m}\geq \frac{(1+p)z+(1-p)w}{2}\right)\\ =&P\left(\frac{1}{m}\sum_{j=0}^{m-1}\mathbf{1}_{S_k^\mathbf{I}(\mathbf{Z})}(j)\geq
\frac{(1+p)z+(1-p)w}{2}\right)\\ \geq &\frac{(1-p)(z-w)}{2}.
\end{align*}

Since $\mathbf{Y}$ is a mixture of $\mathbf{Z}$ and $\mathbf{W}$, we obtain
$$
P\left(\frac{|S_k^\mathbf{I}(\mathbf{Y})\cap[0,m-1]|}{m}\geq \frac{(1+p)z+(1-p)w}{2}\right)\geq \frac{p(1-p)(z-w)}{2}.
$$
Then this implies that the density of the set $V_0$ is greater or equal to $\frac{p(1-p)(z-w)}{2}$, since $\mathbf{Y}$ is generated by $\mathbf{x}$. Denote the elements of $V_0$ in an increasing order
as $V_0=\{v_0,v_1,...\}$, and define a subset $V_1$ of $V_0$:
$$
V_1=\{v_{im}, i\in\mathbb{N}\}.
$$
That is, we only take each $m-$th element in $V_0$ to form $V_1$. Then $V_1$ has a density which is larger than or equal to $\frac{p(1-p)(z-w)}{2m}$. Moreover, the construction of $V_1$ guarantees
that the intervals $[j, j+m-1], j\in V_1$ are disjoint. We further take a subset of $V_1$, denoted as $V_2$, which has a density exactly equal to $\frac{p(1-p)(z-w)}{2m}$. Finally, define
$$
H=\bigcup_{j\in V_2}[j,j+m-1],
$$
then $H$ consists of disjoint intervals of integers, each with length (number of integers) $m$, and the set $H$ has density $\frac{p(1-p)(z-w)}{2}$.

Recall that $V_0$, $V_1$, $V_2$ and $H$ all depend on $m$, so we can also denote them respectively as $V_0(m)$, $V_1(m)$, $V_2(m)$ and $H(m)$. Notice, however, that the density of $H(m)$ does not depend on $m$.
Now we construct an asymptotically proportional contraction $G$ of the index set $\mathbb{N}_0$ in the following inductive way:

\begin{enumerate}
\item Define set $G(1)=H(1)$. Since $G(1)$ has a density given by $d:=\frac{p(1-p)(z-w)}{2}$, for any $\epsilon_1>0$, there exists $N(1)\in\mathbb{N}$, such that $N(1)\in G(1)$, and
$$
\left|\frac{|G(1)\cap [0,n]|}{n+1}-d\right|\leq \frac{\epsilon_1}{3}.
$$
for any $n\geq N(1)$. Moreover, since $H(2)$ also has a density given by $d$, we can take $N(1)$ large enough so that
$$
\left|\frac{|H(2)\cap [0,n]|}{n+1}-d\right|\leq \frac{\epsilon_1}{3}
$$
for any $n\geq N(1)$.

\item Let $\{\epsilon_i\}$ be a sequence of positive numbers decreasing to 0. Assume that we already have a set $G(m)$ and a positive integer $N(m)$, where $G(m)$ consists of intervals of integers
    with lengths increasing to $m$, and has a density given by $d$; $N(m)$ is the endpoint of an interval with length $m$ in $G(m)$: $N(m)-i\in G(m), i=0,...,m-1$, and satisfies
    $$
    \left|\frac{|G(m)\cap [0,n]|}{n+1}-d\right|\leq \frac{\epsilon_m}{3}
    $$
and
$$
\left|\frac{|H(m+1)\cap [0,n]|}{n+1}-d\right|\leq \frac{\epsilon_m}{3}
$$
for $n\geq N(m)$. Then define
$$
G(m+1)=(G(m)\cap[0,N(m)])\cup \bigcup_{\substack{i\in V_2(m+1),\\i\geq N(m)+1}}[i,i+m].
$$
That is, $G(m+1)$ is obtained by joining the part of $G(m)$ before $N(m)$ and the part of $H(m+1)$ after $N(m)$, but the area around the joint point is modified so that only the whole intervals in
$H(m+1)$ are kept. Notice that such a defined quantity $G(m+1)$ consists of intervals of integers with lengths increasing to $m+1$. Since both $H(m+1)$ and $H(m+2)$ has a density given by $d$,
there exists $N(m+1)>N(m)$, such that $N(m+1)-i\in G(m+1), i=0,...,m$,
$$
\left|\frac{|G(m+1)\cap [0,n]|}{n+1}-d\right|\leq \frac{\epsilon_{m+1}}{3}
$$
and
$$
\left|\frac{|H(m+2)\cap [0,n]|}{n+1}-d\right|\leq \frac{\epsilon_{m+1}}{3}.\\
$$
for $n\geq N(m+1)$.

\item Define $G$ by
\begin{align*}
G=&\lim_{m\to\infty}G(m)\\ =&\bigcup_{m=1}^\infty G(m)\cap[N(m-1)+1,N(m)]
\end{align*}
where $N(0)=-1$.\\
\end{enumerate}

The set $G$ that we constructed consists of intervals of integers with lengths going to infinity. It is not difficult to see that we can make $G$ to have a density given by $d$. Indeed, for
$m\in\mathbb N$ and any $n\in[N(m-1)+1, N(m)]$,
\begin{align*}
&\left|\frac{|G\cap[0,n]|}{n+1}-d\right|\\ =&\left|\frac{|G(m)\cap[0,n]|}{n+1}-d\right|\\ \leq & \left|\frac{|H(m)\cap[0,n]|}{n+1}-d\right|\\
&+\left|\frac{G(m-1)\cap[0,N(m-1)]}{N(m-1)+1}-\frac{H(m)\cap[0,N(m-1)]}{N(m-1)+1}\right|+O(m/n)\\ \leq &
\left|\frac{|H(m)\cap[0,n]|}{n+1}-d\right|+\left|\frac{G(m-1)\cap[0,N(m-1)]}{N(m-1)+1}-d\right|\\ &+\left|\frac{H(m)\cap[0,N(m-1)]}{N(m-1)+1}-d\right|+O(m/n)\\ \leq &
\frac{\epsilon_{m-1}}{3}+\frac{\epsilon_{m-1}}{3}+\frac{\epsilon_{m-1}}{3}+O(m/n)\\ =&\epsilon_{m-1}+O(m/n).
\end{align*}

The error term $O(m/n)$ comes from the possible difference between $H(m)$ and $G(m)$ over $[N(m-1)+1, N(m)]$ due to the modification made around the joint point, and can be made arbitrarily small by
taking $N(m-1)$ to be large enough.

As a result, $G$ is an asymptotically proportional contraction of the index set $\mathbb{N}_0$. Moreover, by construction, it is clear that the lower density of $S_k^\mathbf{I}(\mathbf{x})$ in $G$,
defined as
$$
\liminf_{n\to\infty}\frac{|S_k^\mathbf{I}(x)\cap G\cap [0,n-1]|}{|G\cap [0,n-1]|},
$$
is greater or equal to $\frac{(1+p)z+(1-p)w}{2}$. Similar as before, let $\mathbf{x}'$ be the asymptotically proportional contraction of $\mathbf{x}$ determined by $G$. Then
$S_k^\mathbf{I}(\mathbf{x}')$ will have the same limiting behavior as $S_k^\mathbf{I}(\mathbf{x})$ restricted in $G$. Hence, either $S_k^{\mathbf{I}}(\mathbf{x}')$ has a density greater or equal to
$\frac{(1+p)z+(1-p)w}{2}$, or it does not have a density, while $S_k^\mathbf{I}(\mathbf{x})$ has a density given by $pz+(1-p)w$. Thus, we have found an asymptotically proportional contraction of
$\mathbf{x}$ which does not induce the same process as the original sequence $\mathbf{x}$.
\end{proof}

\section{Results of Stationarity Tests Applied to Paths in $A$}
The previous section shows that the set of functions $A$ is large enough, such that any stationary process must put mass $1$ on $A$. In this section, our goal is to show that the set $A$ is also small
enough, in the sense that it only contains the ``essentially stationary'' paths. To this end, we consider the stationarity tests applied to the paths in $A$, and prove that the results are the
best that we can expect.

Let $T$ be a hypothesis test for sample size $n$ and consider the null hypothesis $H_0$: $\mathbf X=\{X_0,...,X_{n-1}\}$ is stationary, or more precisely, $H_0$: $\mathbf{X}$ is from a
stationary time series defined on $\mathbb{R}^{\mathbb{N}_0}$ or $\mathbb{R}^{\mathbb{Z}}$. In other words, $T$ is a mapping from $\mathbb{R}^n$ to $\{0,1\}$, where $0$ and $1$ correspond to
``acceptance'' and ``rejection'' of the null hypothesis, respectively. Alternatively, $T$ can be represented as $\mathbf{1}_{C_T}(x_0,...,x_{n-1})$, where $C_T\in\mathcal{C}_{\mathbb{R}^n}$ is the
critical region (or, equivalently, the rejection region) of the test, $\mathcal{C}_{\mathbb{R}^n}$ being the cylindrical $\sigma-$field in $\mathbb{R}^n$. Define
$$
\alpha_T(P)=P(T(\mathbf{X})=1)=P(C_T)
$$
for $P\in\mathcal{P}_0$, the collection of stationary probability measures restricted to $\mathbb{R}^n$, then the size of the test $T$ is
$$
\alpha=\sup_{P\in\mathcal{P}_0}\alpha_T(P).
$$
We further define $g_n=g_{n,0}$ to be the projection: $g_n(\mathbf{x})=(x_0,...,x_{n-1}), \mathbf{x}\in\mathbb{R}^{\mathbb{N}_0}$, and $g_{n,i}:=g_n\circ T^i$. Thus, $g_{n,i}$ is the operation of taking the moving
window of size $n$ starting from $x_i$.

\begin{theorem}\label{t:testing}
Let $\mathbf{x}\in A$. Assume that $T$ is a given test for stationarity of size $\alpha$ and with a given sample size $n$. If one of the two following conditions is satisfied:
\begin{enumerate}
\item the critical region $C_T$ is closed; or \item the boundary of the critical region: $bd(C_T)$ is a null set under any $P\in\mathcal{P}_0$,
\end{enumerate}
then the upper density of the index set
$$
\{i\in\mathbb{N}_0: g_{n,i}(\mathbf{x})\in C_T\}
$$
is smaller than or equal to $\alpha$.
\end{theorem}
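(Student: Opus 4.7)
The plan is to combine weak convergence of the $n$-dimensional empirical measures of $\mathbf{x}$ with the Portmanteau theorem applied to the critical region $C_T$.

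First, observe that the upper density in the statement equals
$$\limsup_{N\to\infty}\frac{1}{N}\sum_{i=0}^{N-1}\mathbf{1}_{C_T}\bigl(g_{n,i}(\mathbf{x})\bigr),$$
which is $\limsup_N F^n_N(C_T)$ once we extend the notation $F^n_N$ from open rectangles in $\mathcal{I}^n$ to arbitrary Borel sets of $\mathbb{R}^n$ via the same window-counting formula. Let $\mu_n$ denote the joint distribution of $(Y^{\mathbf{x}}_0,\ldots,Y^{\mathbf{x}}_{n-1})$ under the induced stationary process $\mathbf{Y}^{\mathbf{x}}$; by the construction in Section~2, $F^n_N(\mathbf{I})\to\mu_n(\mathbf{I})$ for every open rectangle $\mathbf{I}\in\mathcal{I}^n$.

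Next, I would establish the weak convergence $F^n_N\Rightarrow\mu_n$. Property T upgrades to $n$-dimensional tightness, since
$$1-F^n_N\bigl((-K,K)^n\bigr)\leq\sum_{j=0}^{n-1}\bigl(1-F^1_N((-K,K))\bigr)+O(n/N),$$
whose $N\to\infty$ limsup followed by $K\to\infty$ is zero by Property T. Tightness combined with convergence on the generating $\pi$-system of open rectangles forces $F^n_N\Rightarrow\mu_n$: any weakly convergent subsequence has a limit $\nu$ that must coincide with $\mu_n$ on every open rectangle whose coordinates lie in a countable dense set of joint $\mu_n$- and $\nu$-continuity points (Portmanteau's open-set inequality together with the continuity property yields equality there), hence $\nu=\mu_n$ by the $\pi$-$\lambda$ theorem, and the full sequence converges.

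Third, because $\mathbf{Y}^{\mathbf{x}}$ is stationary, $\mu_n$ lies in $\mathcal{P}_0$, so by the definition of the size of $T$ we have $\mu_n(C_T)=\alpha_T(\mu_n)\leq\alpha$. Applying Portmanteau to $C_T$ then yields, under Case~1 (closed $C_T$), $\limsup_N F^n_N(C_T)\leq\mu_n(C_T)\leq\alpha$, and under Case~2 (where $\mu_n(bd(C_T))=0$ because $\mu_n\in\mathcal{P}_0$), the stronger conclusion $\lim_N F^n_N(C_T)=\mu_n(C_T)\leq\alpha$. Either outcome gives the desired bound on the upper density.

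The main technical obstacle is the second step: passing from pointwise convergence on the $\pi$-system of open rectangles to genuine weak convergence of probability measures. Tightness is indispensable, and one must route through continuity sets of $\mu_n$ to force equality between $\mu_n$ and any subsequential weak limit, since raw convergence on open sets only yields one Portmanteau inequality direction. Once weak convergence is in hand, the rest of the argument amounts to unwinding definitions and reading off the standard Portmanteau inequalities for closed sets or continuity sets.
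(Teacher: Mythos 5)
Your argument is correct, and it reaches the same conclusion through genuinely different machinery than the paper's. The paper never establishes weak convergence of the window empirical measures; its key lemma is Proposition \ref{prop:jordan}, an inner-approximation statement producing a finite disjoint union $J$ of hypercubes with $J\subseteq C_T^c$ and $P(J)\geq P((C_T^c)^\circ)-\epsilon$. Because such $J$ belong to the class $\mathcal{J}_n$ of sets whose window frequencies converge to the induced measure, this yields the lower bound $1-\alpha-\epsilon$ on the lower density of the acceptance indices directly --- in effect a hand-made, one-sided Portmanteau inequality for the single open set $(C_T^c)^\circ$, requiring neither tightness nor Prokhorov's theorem. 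You instead upgrade convergence on open rectangles to full weak convergence $F^n_N\Rightarrow\mu_n$, which forces you to verify tightness via Property T and to identify subsequential limits through continuity rectangles and the $\pi$-$\lambda$ theorem; both cases of the theorem then fall out of the standard Portmanteau inequalities. Your route costs more (the tightness and identification steps are exactly the obstacle you flag, and they are genuinely needed for Prokhorov's theorem to apply) but also buys more: under condition (2) you obtain the exact limit $F^n_N(C_T)\to\mu_n(C_T)$ rather than only a bound on the upper density, and weak convergence of $F^n_N$ is a reusable statement, e.g.\ for any test statistic that is a continuous function of the window. The paper's route is more elementary, and its approximation lemma is recycled for the asymptotic result in Proposition \ref{prop:asymptotic}. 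Both proofs ultimately rest on the same two pillars: convergence of window frequencies on open rectangles, which is built into the definition of $A$, and the observation that $\mu_n\in\mathcal{P}_0$, whence $\mu_n(C_T)\leq\alpha$.
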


Theorem \ref{t:testing} shows that if we apply a ``well-behaved'' stationarity test, in the sense that it satisfies one of the two conditions listed in the theorem, to a moving window with length $n$ of any
path $\mathbf{x}$ in the set $A$, then the limiting frequency that the null hypothesis of stationarity is rejected should not exceed the size of the test. Intuitively, this ensures that when we apply
a stationarity test to a path in $A$, we get the best possible result that we come to expect. More precisely, notice that the size $\alpha$ can be approached by the rejection rate of the null hypothesis even if
it is true. Then by the ergodic decomposition, for arbitrarily small $\epsilon>0$, there exists an ergodic process, for which the rejection rate is larger than $\alpha-\epsilon$. Interpreting
ergodicity as the equivalence between the mean across time and the mean across space, for a typical path of this ergodic process, the null hypothesis should be rejected with a limiting frequency
greater than $\alpha-\epsilon$ when the window of length $n$ moves from the origin to $+\infty$. Therefore having a limiting frequency of rejection smaller or equal to $\alpha$ is the best that
we should expect to get. Any further requirement will exclude typical paths from certain stationary processes.

The significance of Theorem \ref{t:testing} resides in the conclusion that if a path $\mathbf{x}$ is known to belong to set $A$, then it is ``statistically indistinguishable'' with a typical path from a stationary process, in the sense that its performance under any stationarity test satisfying the condition of Theorem \ref{t:testing} will be at least as good as the path from the stationary process. In other words, we should not expect to find any statistical method to be able to discriminate between $\mathbf{x}$ and a typical path from some stationary process.

\begin{proof}[Proof of Theorem \ref{t:testing}]
Let $\mathbf{x}\in A$ and $\mathbf{Y}^{\mathbf{x}}$ be the ergodic process that $\mathbf{x}$ induces. Define
$$
\mathcal{J}_n=\{J\in\mathcal{C}_{\mathbb{R}^n}: \lim_{m\to\infty}\frac{\sum_{i=0}^{m-1}\mathbf{1}_J(g_{n,i}(\mathbf{x}))}{m}=P(g_n(\mathbf{Y}^{\mathbf{x}})\in J)\},
$$
where $P$ is the stationary measure induced by $\mathbf{x}$.

By the definition of set $A$, $\mathcal J_n$ includes all of the $n$-dimensional cylinder sets (\textit{i.e.,} open hypercubes). In other words, $\mathcal{I}^n\subset \mathcal{J}_n$. Moreover, ${\mathcal J}_n$ clearly satisfies the following
properties:
\begin{enumerate}
\item $\phi\in\mathcal{J}_n, \mathbb{R}^n\in\mathcal{J}_n$; \item $J_1,J_2\in\mathcal J_n$, $J_1\supseteq J_2$ implies $J_1\setminus J_2\in\mathcal{J}_n$; \item $J_1,J_2\in\mathcal J_n$, $J_1\cap J_2=\phi$
    implies $J_1\cup J_2\in\mathcal{J}_n$.
\end{enumerate}
This is to say that $\mathcal J_n$ is closed under true difference and finite disjoint union. The following proposition is a simple consequence of the fact that the Euclidean space $\mathbb{R}^n$ with its
usual topology is complete separable.

\begin{proposition}\label{prop:jordan}
Let $C$ be a $\mathcal{C}_{\mathbb{R}^n}$-measurable set, $P$ be a probability measure on $(\mathbb{R}^n, \mathcal{C}_{\mathbb{R}^n})$. Then for any $\epsilon>0$, there exists $J\in\mathcal{J}_n$,
$J\subseteq C$, such that $P(J)\geq P(\mathring{C})-\epsilon$.
\end{proposition}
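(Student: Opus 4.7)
The plan is to approximate $\mathring{C}$ from inside by a finite union of open hypercubes, and to verify, using only the stated closure properties, that any such finite union automatically lies in $\mathcal{J}_n$. Since $\mathbb{R}^n$ is second countable — the open hypercubes with rational endpoints form a countable basis — the open set $\mathring{C}$ admits a representation $\mathring{C} = \bigcup_{i=1}^\infty H_i$ with each $H_i \in \mathcal{I}^n \subseteq \mathcal{J}_n$. Continuity of the measure $P$ then gives $P\!\left(\bigcup_{i=1}^N H_i\right) \uparrow P(\mathring{C})$, so I would fix $N$ large enough that $P\!\left(\bigcup_{i=1}^N H_i\right) \geq P(\mathring{C}) - \epsilon$.

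The key auxiliary claim is that any finite union of open hypercubes belongs to $\mathcal{J}_n$, which I would prove by induction on the number $k$ of hypercubes. The base case $k=1$ is immediate. For the inductive step, given $H_1, \ldots, H_{k+1} \in \mathcal{I}^n$, set $U_k := \bigcup_{i=1}^k H_i$, which lies in $\mathcal{J}_n$ by hypothesis, and decompose $U_{k+1} = U_k \sqcup (H_{k+1} \setminus U_k)$ as a disjoint union. Since $\mathcal{I}^n$ is closed under finite intersection, $\bigcup_{i=1}^k (H_i \cap H_{k+1})$ is a union of at most $k$ open hypercubes; by the inductive hypothesis it lies in $\mathcal{J}_n$, and it is contained in $H_{k+1}$. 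Closure of $\mathcal{J}_n$ under proper difference then gives $H_{k+1} \setminus U_k = H_{k+1} \setminus \bigcup_{i=1}^k (H_i \cap H_{k+1}) \in \mathcal{J}_n$, and closure under finite disjoint union yields $U_{k+1} \in \mathcal{J}_n$.

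Taking $J := \bigcup_{i=1}^N H_i$ then delivers the desired element: $J \in \mathcal{J}_n$, $J \subseteq \mathring{C} \subseteq C$, and $P(J) \geq P(\mathring{C}) - \epsilon$. The only delicate point is the inductive step, where one must reshape the difference $H_{k+1} \setminus U_k$ so that it matches the hypothesis $J_1 \supseteq J_2$ of the proper-difference axiom; this is handled by intersecting each $H_i$ with $H_{k+1}$ before taking the difference, so that the subtracted set is automatically contained in the minuend. Separability of $\mathbb{R}^n$ is used only to obtain a countable basis of open hypercubes for the initial decomposition of $\mathring{C}$.
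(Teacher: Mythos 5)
Your proof is correct, and it follows the same overall skeleton as the paper's sketch: take the countable basis of open hypercubes with rational vertices, write $\mathring{C}$ as a countable union of them, and use continuity from below to extract a finite subunion $J=\bigcup_{i=1}^N H_i$ with $P(J)\geq P(\mathring{C})-\epsilon$. Where you diverge is in showing that this finite union lies in $\mathcal{J}_n$. The paper finishes by ``repartitioning $\bigcup_{i=1}^{k(\epsilon)}B_i$ into finite disjoint hypercubes,'' which is not literally possible within $\mathcal{I}^n$ for $n\geq 2$: a union of two overlapping open boxes cannot be written as a finite disjoint union of \emph{open} boxes, so the repartition necessarily produces boxes with half-open or closed faces, and one must additionally know that such boxes belong to $\mathcal{J}_n$ --- a fact the paper only asserts later, in the proof of Proposition \ref{prop:asymptotic}. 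Your induction avoids this entirely: writing $U_{k+1}=U_k\sqcup\bigl(H_{k+1}\setminus\bigcup_{i=1}^k(H_i\cap H_{k+1})\bigr)$, noting that $\mathcal{I}^n$ is closed under intersection so the subtracted set is again a union of at most $k$ open boxes contained in $H_{k+1}$, and then invoking only the stated closure of $\mathcal{J}_n$ under proper difference and finite disjoint union. This keeps the argument entirely inside $\mathcal{I}^n$ and the three listed axioms, making it self-contained and, in fact, patching the one soft spot in the paper's sketch; the price is a slightly longer (strong) induction in place of a one-line appeal to repartitioning.
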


\begin{proof}

The proof of this proposition is fundamental. Here we only provide a sketch of the proof. Consider a collection of all hypercubes whose faces are parallel to the axes and whose vertices have rational
coordinates. This is a countable topological basis of $\mathbb{R}^n$ with its usual topology. Thus, for any $C$, its interior $\mathring{C}$, as an open set, can be expressed as the (countable) union
of some members of this topological basis, denoted as $B_1, B_2,...$. For any $\epsilon>0$, there exists a finite number $k(\epsilon)$, such that
$P(\cup_{i=1}^{k(\epsilon)}B_i)>P(\mathring{C})-\epsilon$. Repartitioning $\cup_{i=1}^{k(\epsilon)}B_i$ into finite disjoint hypercubes completes the proof.

\end{proof}

The proof of Theorem \ref{t:testing} becomes simple. Let $T$ be a given test of size $\alpha$ and with a sample size $n$, and let $P$ be the stationary measure induced by $\mathbf{x}$. Hence
$P(C_T)\leq \alpha$. If $T$ satisfies one of the two conditions listed in the theorem, then $P((C_T^c)^\circ)=P(C_T^c)\geq 1-\alpha$, where $(C_T^c)^\circ$ is the interior of $C_T^c$. For
$\epsilon>0$, by Proposition \ref{prop:jordan}, there exists $J\in\mathcal J_n$, $J\subseteq C_T^c$,, such that
$$
P(g_n(\mathbf{Y}^{\mathbf{x}})\in J)\geq P(C_T^c)-\epsilon\geq 1-\alpha-\epsilon.
$$
Since $J\in\mathcal J_n$, the set $\{i\in\mathbb{N}_0: g_{n,i}(\mathbf{x})\in J\}$ has a density which is greater or equal to $1-\alpha-\epsilon$. This implies that $\{i\in\mathbb{N}_0: g_{n,i}(\mathbf{x})\in
C_T^c\}$ has a lower density which is greater than or equal to $1-\alpha-\epsilon$. Since $\epsilon$ can be taken arbitrarily small, the lower density of $\{i\in\mathbb{N}_0: g_{n,i}(\mathbf{x})\in
C_T^c\}$ is at least $1-\alpha$. In other words,  the upper density of $\{i\in\mathbb{N}_0: g_{n,i}(\mathbf{x})\in C_T\}$ is smaller than or equal to $\alpha$.
\end{proof}

In practice, most of the stationarity tests introduce additional assumptions on the stochastic processes (time series) in their null hypotheses or alternative hypotheses in constructing the tests or
in analyzing their powers. A close examination of the proof of Theorem \ref{t:testing} reveals that such additional assumptions should not affect the result of the theorem. That is, if we can check
that the process $\mathbf{Y}^{\mathbf{x}}$ satisfies the additional assumptions of a test, then applying the test to a moving window of the path $\mathbf{x}\in A$ will still lead to a limiting
frequency of rejection no larger than the size of the test. Intuitively, the fact that the path $\mathbf{x}$ is in $A$ still guarantees the stationarity; if the test results in a higher frequency of
rejection, this is due to the violation of the additional assumptions rather than evidence of non-stationarity.

On the other hand, the two conditions in Theorem \ref{t:testing} are very general. As a matter of fact, a good test should have $bd(C_T)$ to be a null set under the null hypothesis after all, and this
is almost always the case in practice. Consequently, many prior studies do not even specify the openess/closedness of the critical region. It is not difficult to check that all of the stationarity
tests mentioned in Introduction satisfy the conditions of Theorem \ref{t:testing}. Thus, following our discussion on the additional assumptions, the result of Theorem \ref{t:testing} applies to all of
these tests. In some sense, what we have shown is that all of the existing time series tests for stationarity reduce to checking whether or not the given path is in the set $A$.\\

The above results are for tests with a fixed sample size. Next we discuss two types of asymptotic behaviors of paths in $A$.

The first kind of asymptotic behavior does not require any additional assumption or technical result. Many stationarity tests used in practice do not have a known exact size, but only an asymptotic size. In other words, there are sequences of tests with sample sizes $n$ increasing to infinity, such that although the size for any test with a fixed sample size is unknown, there exists a limiting size as $n\to\infty$. In this case, Theorem \ref{t:testing} immediately allows us to claim the following result.

\begin{corollary}
Let $\mathbf x\in A$. Assume that $\{T_n\}_{n\in\mathbb N}$ is a sequence of tests for stationarity, where $T_n$ is for sample size $n$ and has size $\alpha_n$. If $\lim_{n\to\infty}\alpha_n=\alpha$, and for each $n\in\mathbb N$, one of the two conditions in Theorem \ref{t:testing} is satisfied by the critical region $C_{T_n}$ of $T_n$, then for any $\epsilon>0$, there exists $N_\epsilon\in\mathbb N$, such that the upper density of the index set
$$
\{i\in \mathbb N_0: g_{n,i}(\mathbf x)\in C_{T_n}\}
$$
is smaller than $\alpha+\epsilon$ for any $n\geq N_\epsilon$.

\end{corollary}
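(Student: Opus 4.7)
The plan is to recognize that this corollary is a direct consequence of Theorem \ref{t:testing} combined with the definition of the limit $\alpha_n \to \alpha$. Unlike the main results of the paper, no new path-space or measure-theoretic machinery is needed; I simply apply Theorem \ref{t:testing} separately at each fixed sample size $n$, then choose $n$ large enough so that $\alpha_n$ is close to $\alpha$.

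Concretely, I would fix $\epsilon > 0$ and use $\lim_{n\to\infty}\alpha_n = \alpha$ to pick $N_\epsilon \in \mathbb{N}$ such that $\alpha_n < \alpha + \epsilon$ for all $n \geq N_\epsilon$ (for instance by taking $N_\epsilon$ so large that $|\alpha_n - \alpha| < \epsilon/2$ whenever $n \geq N_\epsilon$). Then, for each such $n$, I apply Theorem \ref{t:testing} to the single test $T_n$: since $\mathbf{x} \in A$, the test $T_n$ has sample size $n$ and size $\alpha_n$, and the critical region $C_{T_n}$ satisfies one of the two regularity conditions by hypothesis, Theorem \ref{t:testing} yields that the upper density of $\{i \in \mathbb{N}_0 : g_{n,i}(\mathbf{x}) \in C_{T_n}\}$ is at most $\alpha_n$. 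Chaining the two bounds gives an upper density strictly less than $\alpha + \epsilon$, which is what the corollary asserts.

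There is essentially no obstacle: the corollary is a uniform-in-$n$ repackaging of Theorem \ref{t:testing} along the sequence of sample sizes. The only subtlety worth flagging in the write-up is that Theorem \ref{t:testing} must be invoked at each $n$ \emph{with that test's own size} $\alpha_n$, rather than with the limiting value $\alpha$; this is precisely the point at which the convergence $\alpha_n \to \alpha$ enters, and it is also why no stronger hypothesis (such as uniform control of $bd(C_{T_n})$ across $n$) is required.
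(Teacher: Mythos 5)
Your proposal is correct and matches the paper's intent exactly: the paper offers no separate proof, stating only that the corollary follows immediately from Theorem \ref{t:testing}, and your argument (choose $N_\epsilon$ so that $\alpha_n<\alpha+\epsilon$ for $n\geq N_\epsilon$, then apply the theorem at each fixed $n$ with that test's own size $\alpha_n$) is precisely that immediate deduction. Your closing remark about invoking the theorem with $\alpha_n$ rather than $\alpha$ is the right point to flag.
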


 The second kind of asymptotic result is more challenging. For a fixed path $\mathbf x$, we apply stationarity tests to a longer and longer fraction of the path, always starting from the first term $x_0$, and look at the limiting behavior of the results of these tests. Such limiting results are typically strong and require more assumptions on the tests, as well as some more powerful technical advances. To obtain the results, it is helpful to consider the cylindrical $\sigma$-field $\mathcal C$ over the whole path space $\mathbb R^{\mathbb Z}$, and define
$$
\mathcal J=\{J\in\mathcal C: \lim_{m\to\infty}\frac{\sum_{i=0}^{m-1}\mathbf{1}_J(\theta^i(\mathbf x))}{m}=P(\mathbf Y^{\mathbf X}\in J)\},
$$
where $\theta$ is the shift operator, so that $\mathcal C$ and $\mathcal J$ do not correspond to any fixed $n$. We can improve Proposition \ref{prop:jordan} to the following result.

\begin{proposition}\label{prop:asymptotic}
Let $C$ be a $\mathcal{C}$-measurable set, and let $P$ be a probability measure on $(\mathbb R^{\mathbb Z}, \mathcal C)$. Then for any $\epsilon>0$, there exists $J\in\mathcal{J}$,
$J\subseteq C$, such that $P(J)\geq P(\mathring{C})-\epsilon$.
\end{proposition}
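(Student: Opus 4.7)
The plan is to rerun the proof of Proposition \ref{prop:jordan} with rational finite-dimensional cylinders playing the role that rational hypercubes played in $\mathbb{R}^n$. First I would fix the countable family $\mathcal{B}$ of basic cylinders $\{\mathbf{z}\in\mathbb{R}^{\mathbb{Z}}: z_{j_\ell}\in(a_\ell,b_\ell),\ \ell=1,\dots,k\}$ with $k\in\mathbb{N}$, $j_1<\cdots<j_k\in\mathbb{Z}$, and $a_\ell<b_\ell$ rational. Since $\mathbb{R}^{\mathbb{Z}}$ with the product topology is second countable, $\mathcal{B}$ is a topological basis, the cylindrical and Borel $\sigma$-fields coincide, and for any $C\in\mathcal{C}$ one can write $\mathring{C}=\bigcup_{i}B_i$ with $B_i\in\mathcal{B}$.

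Second, I would verify $\mathcal{B}\subseteq\mathcal{J}$. For $B\in\mathcal{B}$ as above, $\mathbf{1}_B(\theta^i\mathbf{x})=\prod_{\ell=1}^{k}\mathbf{1}_{(a_\ell,b_\ell)}(x_{i+j_\ell})$ for every $i$ with $i+j_1\geq 0$. Filling the ``gaps'' in $\{j_1,\dots,j_k\}$ by the unrestricted interval $(-\infty,\infty)$ rewrites this as the indicator of a consecutive set $S_{k'}^{\mathbf{I}'}(\mathbf{x})$ after a translation of the index, so the Cesàro average exists by $\mathbf{x}\in A_0$ and equals $P(\mathbf{Y}^{\mathbf{x}}\in B)$ by construction of $\mathbf{Y}^{\mathbf{x}}$. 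The same linearity and countable-additivity arguments used in Proposition \ref{prop:jordan} then give $\emptyset,\mathbb{R}^{\mathbb{Z}}\in\mathcal{J}$ and show that $\mathcal{J}$ is closed under proper difference and finite disjoint union.

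Finally, given $\epsilon>0$ I would choose $k(\epsilon)$ with $P\bigl(\bigcup_{i\leq k(\epsilon)}B_i\bigr)\geq P(\mathring{C})-\epsilon$, let $F$ be the (finite) union of all coordinate indices appearing in $B_1,\dots,B_{k(\epsilon)}$, view each $B_i$ as an open rational rectangle in $\mathbb{R}^{|F|}$, and apply the finite-dimensional repartitioning already used in Proposition \ref{prop:jordan} to write $\bigcup_{i\leq k(\epsilon)}B_i=\bigsqcup_{j}R_j$ as a finite disjoint union of such rectangles. Each $R_j$ pulled back to $\mathbb{R}^{\mathbb{Z}}$ lies in $\mathcal{B}\subseteq\mathcal{J}$, whence $J:=\bigsqcup_{j}R_j\in\mathcal{J}$, $J\subseteq\mathring{C}\subseteq C$, and $P(J)\geq P(\mathring{C})-\epsilon$. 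The main obstacle is the bookkeeping in this last step: basic cylinders indexed over different finite subsets of $\mathbb{Z}$ cannot be disjointified in place, and the reduction to a common coordinate set $F$ is what converts the problem back to the finite-dimensional one already handled in Proposition \ref{prop:jordan}.
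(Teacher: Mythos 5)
Your proposal is correct in substance but takes a noticeably more direct route than the paper. The paper never invokes second countability of $\mathbb R^{\mathbb Z}$ explicitly; instead it builds up a field-theoretic scaffold: it introduces the auxiliary collection $\mathcal G_n$ of inner-approximable sets, shows via a two-sided approximation trick (approximating both $C_1\cup C_2$ and its complement from inside) that $\mathcal F_n\cap\mathcal J_n$ is a field and hence that the whole field $\mathcal F_n$ generated by hypercubes lies in $\mathcal J_n$, passes to $\mathcal F=\bigcup_n\mathcal F_n\subseteq\mathcal J$, and finally shows that the collection $\mathcal C'$ of sets inner-approximable by members of $\mathcal F$ is closed under countable unions and finite intersections, hence contains the product topology. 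Your argument short-circuits all of this: you approximate $\mathring C$ from inside by a finite union of basic rational cylinders directly (continuity from below), collapse to a finite coordinate set $F$, and reuse the finite-dimensional repartitioning. What the paper's longer route buys is the stronger intermediate fact that the Ces\`aro limit exists and equals $P$ for \emph{every} set in the field $\mathcal F$ (not merely that such sets admit good inner approximations); what your route buys is brevity, plus the explicit and welcome verification that a basic cylinder with non-consecutive indices reduces to a set $S_{k'}^{\mathbf I'}$ by padding the gaps with $(-\infty,\infty)$ --- a point the paper leaves implicit. The one place you are slightly too quick is the claim that each piece $R_j$ of the disjointification lies in $\mathcal B$: a finite union of open rectangles generally cannot be partitioned into \emph{open} rectangles, only into rectangles with some half-open faces. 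You therefore need the additional observation --- which the paper states explicitly for its $\mathcal J_n$, and which follows by induction on the number of non-open faces using closure of $\mathcal J$ under proper differences, e.g.\ $(a,b]\times E=((a,b')\times E)\setminus((b,b')\times E)$ --- that rectangles with arbitrary boundary types still belong to $\mathcal J$. With that patch your argument is complete.
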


\begin{proof}
For each $n\in\mathbb N$, let $\mathcal J_n$ be defined as in the proof of Theorem \ref{t:testing}. Denote by $\mathcal G_n$ the collection of the sets $C\in \mathcal C_{\mathbb R^n}$ satisfying for any $\epsilon>0$, there exists $J\in\mathcal J_n$, $J\subseteq C$, such that $P(J)\geq P(C)-\epsilon$. Clearly, $\mathcal J_n\subseteq \mathcal G_n$. In particular, all of the $n$-dimensional open hypercubes are in $\mathcal G_n$. Indeed, it is not difficult to verify that all of the $n$-dimensional hypercubes, regardless of the openess/closedness of the boundaries, are all in $\mathcal J_n\subseteq \mathcal G_n$. Moreover, $\mathcal G_n$ is closed under finite disjoint unions. To see this, let $C_1,..., C_m$ be disjoint sets in $\mathcal G_n$. Let $J_1,...,J_m$ be the sets satisfying Proposition \ref{prop:jordan} for $C_1,...,C_m$ and $\epsilon_i=2^{-i}\epsilon$, $i=1,...,m$, then $J=\bigcup_{i=1}^mJ_i$ is in $\mathcal J_n$, $J\subseteq C$ and satisfies $P(J)\geq P(C)-\epsilon$. Denote by $\mathcal F_n$ the field generated by the $n$-dimensional hypercubes. Then a result in \cite{billingsley:1995} shows that each member in $\mathcal F_n$ can be expressed as a finite union of disjoint hypercubes. As a result, $\mathcal F_n\subseteq \mathcal G_n$.

Next we prove that $\mathcal F_n\subseteq \mathcal J_n$. Note that $\mathbb R^n\in \mathcal F_n\cap\mathcal J_n$, and $C\in \mathcal F_n\cap\mathcal J_n$ implies $C^c\in \mathcal F_n\cap\mathcal J_n$. Furthermore, $\mathcal F_n\cap\mathcal J_n$ is closed under union. Indeed, let $C_1, C_2\in\mathcal F_n\cap\mathcal J_n$. Then $C_1\cup C_2$ and $(C_1\cup C_2)^c$ are both in $\mathcal F_n\subseteq \mathcal G_n$. Consequently, for each $\epsilon>0$, there exist $J_{1,\epsilon},J_{2,\epsilon}\in \mathcal J_n$, $J_{1,\epsilon}\subseteq C_1\cup C_2$, $J_{2,\epsilon}\subseteq (C_1\cup C_2)^c$, such that $P(J_{1,\epsilon})\geq P(C_1\cup C_2)-\epsilon$ and $P(J_{2,\epsilon})\geq P((C_1\cup C_2)^c)-\epsilon$. Therefore, we have
\begin{align*}
& P(C_1\cup C_2)-\epsilon\\
\leq & P(J_{1,\epsilon})\\
= & \lim_{m\to\infty}\frac{\sum_{i=0}^{m-1}\mathbf{1}_{J_{1,\epsilon}}(g_{n,i}(\mathbf{x}))}{m}\\
\leq & \liminf_{m\to\infty}\frac{\sum_{i=0}^{m-1}\mathbf{1}_{C_1\cup C_2}(g_{n,i}(\mathbf{x}))}{m}.
\end{align*}
Letting $\epsilon$ to $0$ leads to the following result:
$$
\liminf_{m\to\infty}\frac{\sum_{i=0}^{m-1}\mathbf{1}_{C_1\cup C_2}(g_{n,i}(\mathbf{x}))}{m}\geq P(C_1\cup C_2).
$$
Symmetrically, using $J_{2,\epsilon}$ we have
$$
\liminf_{m\to\infty}\frac{\sum_{i=0}^{m-1}\mathbf{1}_{(C_1\cup C_2)^c}(g_{n,i}(\mathbf{x}))}{m}\geq P(C_1\cup C_2)^c.
$$
Thus, $\lim_{m\to\infty}\frac{\sum_{i=0}^{m-1}\mathbf{1}_{C_1\cup C_2}(g_{n,i}(\mathbf{x}))}{m}$ exists and is equal to $P(C_1\cup C_2)$. Hence $C_1\cup C_2\in\mathcal F_n\cap \mathcal J_n$. $\mathcal F_n\cap \mathcal J_n$ is a field. Since $\mathcal F_n$ is the field generated by the $n$-dimensional hypercubes, and all of the hypercubes are both in $\mathcal F_n$ and $\mathcal J_n$, we must have $\mathcal F_n\subseteq \mathcal J_n$.

Finally, let $\mathcal F=\bigcup_{n\in\mathbb N}\mathcal F_n$ be the field on $\mathbb R^{\mathbb Z}$ generated by all cylinder sets. Notice that since any member in $\mathcal F$ only have a finite number of finite-dimensional constraints, $\mathcal F\subseteq \bigcup_{n\in \mathbb N}\mathcal J_n\subseteq \mathcal J$. Denote by $\mathcal C'$ the collection of sets C in $\mathcal C$ satisfying for each $\epsilon>0$, there exists $J\in \mathcal F$, $J\subseteq C$, such that $P(J)\geq P(C)-\epsilon$. By definition, it is easy to see that $\mathcal C'$ contains $\phi$ and $\mathbb R^{\mathbb Z}$. Moreover, let $C_1, C_2,... \in \mathcal C'$, then for any $\epsilon>0$, there exists $N\in\mathbb N$ and $J_1, ,...,J_N\in \mathcal F$, such that $P(\bigcup_{i=N+1}^\infty C_i\setminus \bigcup_{i=1}^N C_i)\leq \frac{\epsilon}{2}$ and $P(J_i)\geq P(C_i)-2^{-i-1}\epsilon$ for $i=1,...,N$. The set $J=\bigcup_{i=1}^N J_i$ is in $\mathcal F$ and satisfies $P(J)\geq P(\bigcup_{i\in\mathbb N}C_i)-\epsilon$. Hence $\bigcup_{i\in\mathbb N}C_i\in \mathcal C'$ Similarly, it is easy to see that $\mathcal C'$ is closed under finite intersections. As a result, $\mathcal C'$ is a topology. Therefore it contains the topology generated by $\mathcal F$, which is the natural topology on $\mathbb R^{\mathbb Z}$. Thus, we can conclude that for any $\mathcal C$-measurable set $C$, for the open set $\mathring{C}$, there exists a set $J\in \mathcal F\subseteq \mathcal J$, such that $P(J)\geq P(\mathring{C})-\epsilon$.
\end{proof}

Proposition \ref{prop:jordan} and its consequence, Theorem \ref{t:testing}, show that for any time series stationarity test with a fixed sample size satisfying some mild conditions, a path in set $A$ will behave as well as a typical path from a stationary process. Proposition \ref{prop:asymptotic} allows us to generalize this statement to any asymptotic property. For instance, let $\{T_n\}_{n\in\mathbb N}$ be a sequence of stationarity tests with sample sizes $n$ and satisfying the condition in Theorem \ref{t:testing}. At the risk of abusing notations, we also use $T_n$ for the corresponding test statistics. Then for $\mathbf x\in A$, the limiting behavior of $T_n(\mathbf x)$ as $n\to\infty$ will be comparable to that of $\mathbf Y^{\mathbf x}$, which is a stationary process.

\begin{example}
If for any stationary time series $\mathbf X$, the limiting rejection rate of $T_n$
$$
\lim_{n\to\infty}\frac{\sum_{i=1}^n T_i(g_i(\mathbf X))}{n}
$$
almost surely exists and is bounded from above by a constant $\alpha$, then Proposition \ref{prop:asymptotic} implies that for any $\mathbf x\in A$ and generic $m\in\mathbb N$,
$$
\lim_{n\to\infty}\frac{\sum_{i=1}^n T_i(g_{i,m}(\mathbf x))}{n}
$$
exists and is bounded from above by $\alpha$. ``Generic'' means, the set of $m$ for which the result does not hold has a limiting density 0 in $\mathbb N$. If the assumption is relaxed to the existence of the upper/lower limit of the rejection rate and their bounds, the corresponding results holds as well for the paths in $A$.
\end{example}

\textbf{Acknowledgements.} This work is partially supported by NSERC grant No. 469065. The authors would like to thank Brett Coburn for being a research assistant for this project.

\end{document}